 \newlength\headseptemp
\numberwithin{equation}{section}    
\theoremstyle{plain}
\newtheorem{Theorem}{Theorem}
\newtheorem{Proposition}[Theorem]{Proposition}
\newtheorem{Lemma}[Theorem]{Lemma}
\theoremstyle{definition}
\theoremstyle{remark}
\newcommand{\NN}{\mathbb{N}}
\newcommand{\ZZ}{\mathbb{Z}}
\newcommand{\complexity}{\mathcal{C}}
\begin{document}

\title[The subshift of Grigorchuk's group]{Combinatorics of  the subshift associated to Grigorchuk's group}

\author{Rostislav Grigorchuk}
\address{Mathematics Department, Texas A\&M University, College Station, TX 77843-3368, USA}
\email{grigorch@math.tamu.edu}

\author{Daniel Lenz}
\address{Mathematisches Institut \\Friedrich Schiller
Universit{\"a}t Jena \\07743 Jena, Germany }
\email{daniel.lenz@uni-jena.de}

\author{Tatiana Nagnibeda}
\address{Section de Math\'{e}matiques, Universit\'{e} de Gen\`{e}ve, 2-4, Rue du
Li\`{e}vre, Case Postale 64 1211 Gen\`{e}ve 4, Suisse}
\email{Tatiana.Smirnova-Nagnibeda@unige.ch}

\keywords{Substitutional subshift, Grigorchuk's group, Schreier
graph, spectrum of Laplacian,  Schroedinger operators}
\date{\today}

\begin{abstract} We study combinatorial properties of the subshift induced by the substitution that describes Lysenok's presentation of Grigorchuk's group of intermediate growth by generators and relators. This subshift has recently appeared in two different contexts: on one hand, it allowed to embed Grigorchuk's group in a topological full group, and on the other hand, it was useful in the spectral theory of Laplacians on the
associated Schreier graphs.
\end{abstract}

\maketitle

\begin{center}
\textit{In memory of Dmitry Victorovich Anosov}
\end{center}

\bigskip




Substitutional dynamical systems constitute
an important class studied in symbolic dynamics. Such systems are
defined by a substitution over the underlying alphabet. They appear
naturally in various branches of mathematics and applications. In
particular, certain substitutional systems provide important models
for the theory of 'aperiodic order',  and the spectral theory of the
associated Schr\"odinger operators becomes a major tool in
understanding the quantum mechanics of quasi-crystals
\cite{BGr,KLS}. Recently it was discovered that substitutional
subshifts are also useful in the study of groups of intermediate
growth \cite{MB,GLN, GLN-survey,Nek}.

This note is devoted to the study of a particular substitution associated to the first group of intermediate growth
constructed by the first author in 1980 in \cite{Gri80} and
generally known as Grigorchuk's group. \footnote{This is how we will refer to it in spite of the first author's reluctance} The
remarkable properties of this group described in \cite{Gri84} were first reported
at Anosov's seminar in the Moscow State University in 1982-83.
The group, defined by its action by automorphisms on the rooted binary tree, is 3-generated but not finitely presented, that is, does not admit a presentation with finitely many relations. However,
 Lysenok found in \cite{Lys} the following recursive presentation of this group by generators and relators
$$\mathcal{J} =
\langle a,b,c,d \mid 1 = a^2 = b^2 = c^2 = d^2 = bcd = \kappa^k
((ad)^4)
 =\kappa^k ((adacac)^4), k = 0,1,2,....\rangle$$
 with
$$\kappa :\{a,b,c,d\}\longrightarrow \{a,b,c,d\}^*$$
given by $\kappa (a) = aca, \kappa (b) = d, \kappa (c) = b, \kappa
(d) = c$.

In \cite{GLN} the authors showed that the substitution appearing in
Lysenok's presentation can be used to determine the spectral type of
the discrete Laplacian on the Schreier graphs naturally associated
with $\mathcal{J}$ via the action of $\mathcal{J}$ on the boundary
of the rooted binary tree. Another interesting fact observed in
\cite{MB} (that also follows from \cite{GLN}), is that $\mathcal{J}$
embeds into the topological full group of a related substitutional
subshift. Various properties of this subshift  were described in
\cite{GLN,GLN-survey}. In particular, it was shown that the subshift
is linearly repetitive. Linear repetitivity has various consequences
including linear bounds on the growth of the word  complexity
function. The purpose of this note is to give a more refined
treatment of this  complexity function.

The word complexity  function $\mathcal{C}$   counts,  for each $n$,
the number of words of length $n$ that appear as subwords in
sequences from the subshift (see below for formal definition). It is
an important combinatorial characteristic of the dynamical system.
For example, the topological entropy of the system can be computed
as
$$h:=\lim_{n\to \infty} \frac{ \ln \mathcal{C}(n)}{n}$$
(see e.g. \cite{Wal}). Hence, linearly repetitive subshifts have
zero entropy.  The main results of the note are the following.

Consider the alphabet
$\mathcal{A} = \{a,x,y,z\}$ and let  $\tau$ be the substitution
mapping $a\mapsto a x a$, $x\mapsto y$, $y\mapsto z$, $z\mapsto x$.
Let $\mbox{Sub}_\tau$ be  the associated set of finite words given
by
$$\mbox{Sub}_\tau =\bigcup_{s\in \mathcal{A}, n\in \NN\cup\{0\} } \mbox{Sub}(\tau^n
(s)),$$ where $\mbox{Sub}(w)$ denotes the set of finite subwords of
the string $w$.

\begin{Theorem}[Complexity Theorem]\label{thm-complexity}
The complexity function $\complexity$ of the subshift associated with the substitution $\tau$ satisfies $\complexity (1) =
4, \complexity (2) = 6, \complexity (3) = 8$ and, for any $n\geq
2$ and $L = 2^n + k$ with $0\leq k < 2^{n}$,
$$\complexity (L) =\left\{ \begin{matrix}  2^{n+1} + 2^{n-1} + 3 k :   0\leq k < 2^{n-1} \\
 2^{n+1} + 2^n + 2 k  :  2^{n-1} \leq  k < 2^n \end{matrix}\right.$$
\end{Theorem}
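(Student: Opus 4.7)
The plan is to exploit the hierarchical $2$-adic Toeplitz shape of the one-sided fixed point $t=\lim_n\tau^n(a)$ and reduce the problem to a clean dyadic recursion for $\complexity$. First, a direct induction on $n$ should show that the letter $t[i]$ depends only on the $2$-adic valuation $v_2(i)$: it equals $a$ when $i$ is odd, and for $v_2(i)\geq 1$ it cycles through $x,y,z$ according to $v_2(i)\bmod 3$. In particular, the subsequence $s$ obtained by reading $t$ at even positions has the same Toeplitz shape on the three-letter alphabet $\{x,y,z\}$, with $x$ at every odd position of $s$ and the analogous structure at deeper dyadic scales. Since $\tau^n(a)$ contains $\tau^k(u)$ as a factor for every letter $u$ and every $k<n$, one has $\mbox{Sub}_\tau=\bigcup_n\mbox{Sub}(\tau^n(a))$, so $\complexity(L)$ is simply the number of distinct length-$L$ windows of $t$.

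Next I would split factors by parity alignment. A length-$L$ window starting at an odd (respectively even) position of $t$ has the shape $a\,s[q]\,a\,s[q+1]\cdots$ (respectively $s[q]\,a\,s[q+1]\,a\cdots$) and is determined by a factor of $s$ of length $\lfloor L/2\rfloor$ or $\lceil L/2\rceil$; the two families are disjoint since their first letters differ. Writing $\complexity_s$ for the complexity of the subshift generated by $s$, this yields
\[
\complexity(2m)=2\complexity_s(m),\qquad \complexity(2m+1)=\complexity_s(m)+\complexity_s(m+1).
\]
Applying the same alignment decomposition inside $s$ gives analogous formulas for $\complexity_s$, except that an aligned and a non-aligned factor of $s$ can collide precisely when the factor is a pure block of $x$'s. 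The main combinatorial input I need here is the sharp lemma that the longest run of $x$'s in $s$ has length exactly $3$ — easily verified from the $2$-adic formula, since two positions with $v_2\equiv 0\pmod 3$ and $v_2\geq 3$ cannot lie within distance $2$. Hence collisions reduce to a single unit at length $2$, giving $\complexity_s(1)=3$, $\complexity_s(2)=5$ and, for every $m\geq 2$,
\[
\complexity_s(2m)=2\complexity_s(m),\qquad \complexity_s(2m+1)=\complexity_s(m)+\complexity_s(m+1).
\]

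The two recursions together force $\complexity_s(L)=\complexity(L)$ for every $L\geq 3$, so $\complexity$ itself satisfies the clean dyadic recursion $\complexity(2m)=2\complexity(m)$ and $\complexity(2m+1)=\complexity(m)+\complexity(m+1)$ as soon as $m\geq 3$. The theorem will then follow by induction on $L$: the values for $L\leq 7$ are verified by direct enumeration of short factors, and the inductive step reduces to a routine dyadic case analysis — writing $L=2^n+k$, one checks that the two sub-cases $0\leq k<2^{n-1}$ and $2^{n-1}\leq k<2^n$ of the claimed formula fold into each other cleanly under halving. The principal technical obstacle is securing the absence-of-$xxxx$ lemma for $s$; once this single combinatorial fact is in hand, the rest of the argument amounts to careful bookkeeping along the binary tree of dyadic scales.
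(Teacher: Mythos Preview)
Your approach is correct and genuinely different from the paper's. The paper proceeds by sandwiching: it proves an upper bound $\complexity(2^{n+1}-1)\le 2^{n+2}+2^n-2$ by direct counting of subwords inside $\tau^{n+3}(a)$, and a matching lower bound by explicitly exhibiting right-special words (a suffix of $\tau^n(a)$ with three extensions, and in the appropriate range a second word with two extensions). Since the bounds coincide at the endpoints $L=2^{n+1}-1$, the first differences $\complexity(L+1)-\complexity(L)$ are forced to equal $3$ or $2$ according to the stated dyadic ranges, and the formula follows by summation.

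Your route instead exploits the $2$-adic Toeplitz structure of $\eta$ to derive the self-similar recursion $\complexity(2m)=2\complexity(m)$, $\complexity(2m+1)=\complexity(m)+\complexity(m+1)$ for $m\ge 3$, and then verifies the closed form by induction. The key observation you use implicitly but should state explicitly is that the even-position subsequence $s$ and its own even-position subsequence $s'$ differ only by the cyclic letter permutation $x\mapsto y\mapsto z\mapsto x$, so $\complexity_{s'}=\complexity_s$; this is what turns the two-level recursion into a genuine self-recursion. Your collision analysis is essentially right, though note that the absence of collisions for $\complexity_s(3)$ relies not on the run-of-$x$'s bound in $s$ but on the fact that $xx$ is \emph{not} a factor of $s'$ (equivalently, $xxx$ occurs in $s$ only at odd positions). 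What your method buys is a cleaner, purely recursive derivation that bypasses the explicit upper-bound count; what the paper's method buys is that it simultaneously identifies all right-special words, yielding Theorem~\ref{thm-right} for free, whereas your argument does not directly produce that information.
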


To prove the theorem, we determine the difference $\mathcal{C}(L) -
\mathcal{C}(L-1)$ for each natural number $L$. In fact, we obtain
even more detailed  information and determine all right-special
words of each length. Here, a word in $Sub_\tau$ is called
\textit{right-special} if it can be extended  in more than one way
to the right by a letter of the alphabet to form another word in
$Sub_\tau$.

\begin{Theorem}[Right-special words]\label{thm-right}
Consider  $n\geq 2$  and $L = 2^n + k$ with $0\leq k < 2^{n}$.
\begin{itemize}
\item  If $0\leq k < 2^{n-1}$, then
 there exist among words of length $L$ in $Sub_\tau$, exactly two right-special words:
   the suffix of length $L$ of the word $\tau^{n}(a)$, which can be extended by
 $x,y,z$; and the suffix of length $L$ of the word $\tau^{n-2}(a) \tau^{n-2} (x)  \tau^{n-1}(a)$ which can be
 extended by $\tau^{n-2} (x)$ and by $\tau^{n-1} (x)$.

\item  If $2^{n-1}\leq k < 2^{n}$, then
 there exists a unique right-special word of length $L$, namely,
 the suffix of $\tau^{n}(a)$ of length $L$ which can be extended by
 $x,y,z$.
\end{itemize}
\end{Theorem}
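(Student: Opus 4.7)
The plan is to run an induction on $n$, first reducing to subwords of the one-sided fixed-point word $\xi=\lim_n\tau^n(a)$. The identity $\tau^{n+1}(a)=\tau^n(a)\,\tau^n(x)\,\tau^n(a)$ shows $\tau^n(a)$ is a prefix of $\tau^{n+1}(a)$, so $\mathrm{Sub}_\tau$ coincides with the set of finite subwords of $\xi$. The structure of $\xi$ is fully explicit: position $p$ carries the letter $a$ if $p$ is odd, and the letter $\tau^{v_2(i)}(x)\in\{x,y,z\}$ if $p=2i$, where $v_2$ denotes the $2$-adic valuation. This description controls every right-extension of every finite subword.

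Next I would verify directly that the listed words are right-special with the stated extensions. For the suffix of length $L$ of $\tau^n(a)$: each copy of $\tau^n(a)$ inside $\xi$ is immediately followed by a non-$a$ letter $\tau^m(x)$ with $m\geq n$, and selecting three consecutive values $m\in\{n,n+1,n+2\}$ (possible because the set $\{i : v_2(i)=m\}$ is nonempty for each $m$) produces three occurrences whose right-extensions together exhaust $\{x,y,z\}$. For the second word the essential identity is
$$\tau^{n-2}(a)\,\tau^{n-2}(x)\,\tau^{n-1}(a)=\tau^{n-2}(axaxa),$$
and since $axaxa\in\mathrm{Sub}_\tau$ is right-special with extensions $\{x,y\}$ (a base-case observation), applying $\tau^{n-2}$ produces a right-special word with extensions $\{\tau^{n-2}(x),\tau^{n-2}(y)\}=\{\tau^{n-2}(x),\tau^{n-1}(x)\}$; taking the length-$L$ suffix preserves these extensions as long as $L<3\cdot 2^{n-1}$.

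To prove that no other right-special words exist I would use recognizability of $\tau$. For $L$ above a small threshold, every $w\in\mathrm{Sub}_\tau$ of length $L$ admits a unique decomposition $w=s\,\tau(u)\,t$ with $u\in\mathrm{Sub}_\tau$, $s$ a proper suffix of some $\tau(\sigma)$, $t$ a proper prefix of some $\tau(\sigma')$, and $\sigma u\sigma'\in\mathrm{Sub}_\tau$. The right-extensions of $w$ are then either forced (if $t$ is a proper prefix, the next letter is determined by $\sigma'$ alone) or in bijection with the right-extensions of $\sigma u\sigma'$ (if $t=\tau(\sigma')$ is complete, the next letter is the first letter of $\tau(\sigma'')$ for the various $\sigma''$ extending $\sigma u\sigma'$, and the map $\sigma''\mapsto$ first letter of $\tau(\sigma'')$ is a bijection on $\{a,x,y,z\}$). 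Hence right-specialness of $w$ forces right-specialness of $\sigma u\sigma'$, a word of length roughly $L/2$. Applying the inductive hypothesis at level $n-1$ lists the possibilities for $\sigma u\sigma'$, and substituting $\tau$ back with the appropriate $s$ and $t$ recovers exactly the one or two words of the statement at level $n$. The jump at $k=2^{n-1}$ has a transparent meaning: the ambient word $\tau^{n-2}(a)\,\tau^{n-2}(x)\,\tau^{n-1}(a)$ has total length $3\cdot 2^{n-1}-1$, so once $L\geq 3\cdot 2^{n-1}$ no suffix of length $L$ of it exists, and the second right-special word simply vanishes.

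The main technical obstacle is establishing recognizability for this particular $\tau$, because $\tau$ acts as a bijection on $\{x,y,z\}$ without changing length, which makes the $\tau$-block boundaries invisible from a naive scan. Concretely, a local pattern $axa$ may be read either as the image $\tau(a)$ or as the middle fragment of $\tau(a)\,\tau(z)\,\tau(a)=axaxaxa$ straddling two block boundaries. I would resolve this by observing that the two readings produce different length-$7$ local contexts around each occurrence of $axa$, so an explicit but short case analysis yields a unique parse for words of length at least some $L_0$. Once recognizability is in place the induction runs cleanly; the base case $n=2$ is handled by direct enumeration of the $10,13,16,18$ subwords of lengths $4,5,6,7$, which can be carried out by hand from the explicit description of $\xi$.
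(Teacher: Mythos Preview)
Your plan is correct in outline and would succeed, but it takes a genuinely different route from the paper. The paper never establishes or invokes recognizability; instead it runs a squeeze argument on the complexity function. Lemma~\ref{lem-lower-cd} exhibits exactly the words you list and counts their extensions, giving $\complexity(L+1)-\complexity(L)\geq 3$ (resp.\ $\geq 2$) on the two ranges of $k$. Separately, Lemma~\ref{lem-upper} bounds $\complexity(2^{n+1}-1)$ from above by counting subwords of $\tau^{n+3}(a)$ and discarding visible repetitions. Summing the increment lower bounds over $[4,2^{n+1}-1]$ hits this ceiling exactly (Proposition~\ref{lem-sometimes-sharp}), so every inequality is an equality; hence the exhibited right-special words, with exactly the exhibited extensions, are all there are. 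Your approach via desubstitution and induction is more structural and would prove Theorem~\ref{thm-right} independently of---indeed prior to---the complexity theorem, whereas the paper gets both results from a single counting argument and avoids the recognizability lemma you correctly flag as the main technical cost. Two small points to tighten in your write-up: you should check explicitly that the two length-$L$ suffixes are distinct (the paper does this by comparing their last $2^{n-1}$ letters), and your remark that the second right-special word ``simply vanishes'' when $L\geq 3\cdot 2^{n-1}$ is only an explanation, not an argument---ruling out a \emph{new} right-special word at those lengths is precisely the job of your inductive step, so the vanishing should be stated as a consequence of the induction rather than as its substitute.
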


\medskip

\bigskip

\medskip


Recall that we consider the alphabet
$\mathcal{A} = \{a,x,y,z\}$ and denote by  $\tau$ be the substitution
mapping $a\mapsto a x a$, $x\mapsto y$, $y\mapsto z$, $z\mapsto x$.
As above, $\mbox{Sub}_\tau$ denotes the associated set of finite words given
by
$$\mbox{Sub}_\tau =\bigcup_{s\in \mathcal{A}, n\in \NN\cup\{0\} } \mbox{Sub}(\tau^n
(s)).$$ The following three properties obviously hold:

\begin{itemize}

\item The letter $a$ is a prefix of $\tau^n (a)$ for all $n\in
\NN\cup\{0\}$.

\item The lengths $|\tau^n (a)|$ converge to $\infty$ for $n\to
\infty$.

\item Every letter of $\mathcal{A}$ occurs in $\tau^n(a)$ for some  $n$.

\end{itemize}

By the first two properties, $\tau^n (a)$ is a prefix of $\tau^{n+1}
(a) $ for all $n\in\NN\cup\{0\}$. Thus, there exists a unique
one-sided infinite word $\eta$ such that $\tau^n (a)$ is a prefix of
$\eta$ for all $n\in\NN\cup\{0\}$. This $\eta$ is a fixed
point of $\tau$,
 i.e., $\tau (\eta) = \eta$. We will refer to it as \textit{the fixed
 point of the substitution $\tau$}.
By the third property, we  have
$$\mbox{Sub}_\tau = \mbox{Sub} (\eta).$$
We can now associate to $\tau$ the set
$$\varOmega_\tau:=\{ \omega\in \mathcal{A}^\ZZ : \mbox{Sub} (\omega)\subset
\mbox{Sub}_\tau\}.$$  The study of $\varOmega_\tau$ can be based on
the investigation of the $\tau^n (a)$.

A direct calculation gives the following recursion formula
$$ (RF)\;\:\hspace{1cm}  \tau^{n+1}(a) =  \tau^{n}(a)  \tau^n (x)  \tau^{n}(a)\;\:\mbox{with}\;\:  \tau^n (x) =\left\{ \begin{array}{ccc}  x &:& n = 3 k, k\in \NN\cup \{0\} \\ y &:& n = 3k+1, k\in \NN\cup\{0\}\\
z&:& n = 3 k + 2, k \in \NN \cup\{0\}  \end{array}, \right. $$ It is
not hard to see that any other letter of $\eta$ is the letter $a$,
and $\eta$ can be written as
$$\eta = a r_1 a r_2 a ....$$
with a unique sequence $(r_n)$  in $\{x,y,z\}$. As $\eta$ is a fixed point of $\tau$,
this gives $$\eta = \tau^n (\eta) = \tau^{n}(a) \tau^n (r_1) \tau^{n}(a)
\tau^n (r_2) \tau^{n}(a) ...$$ This way of writing  $\eta$ is the basis
for our further analysis. While we do not need it here, we mention
in passing that the sequence $\eta$ can be generated by an automaton. The
automaton is given in Figure \ref{automaton} (see \cite{GLN} for
details).

\begin{figure}[h!]
\begin{center}
\hspace*{-1cm}
\includegraphics[scale=0.3]{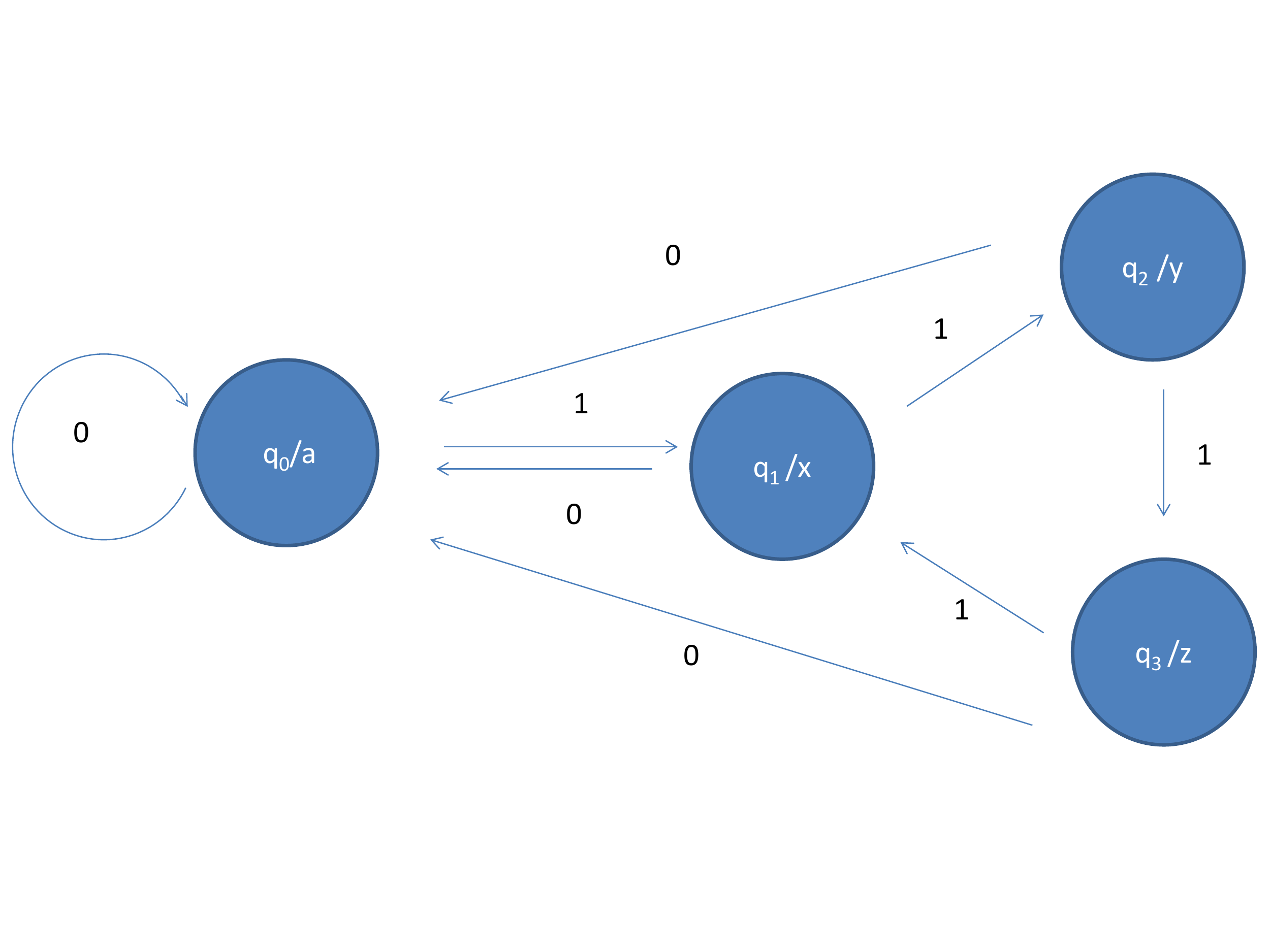}
\hspace*{-1cm}
 \caption{The automaton generating $\eta$}
\label{automaton}
\end{center}
\end{figure}

\smallskip

We define the \textit{word complexity} of $(\varOmega_\tau,T)$ as
$$\complexity : \NN\cup \{0\}\longrightarrow \NN\;\: \; \complexity (L) =
\mbox{number of elements of $\mbox{Sub}_\tau$ of length $L$}.$$ The
word $w s$ with $s\in \{a,x,y,z\}$ and $w s \in \mbox{Sub}_\tau$
will be called an \textit{extension} of $w \in \mbox{Sub}_\tau$  and
we will say in this case that $w$ can be \textit{extended by $s$}. A  word $w\in
\mbox{Sub}_\tau$ is called \textit{right-special} if the set of its
extensions has more than one element. \textit{Left-special} words
are defined in a similar way.

\begin{Proposition} \label{prop-appearance}
Any word $w\in \mbox{Sub}_\tau$ with $|w|\leq |\tau^{n}(a)| = 2^{n+1} -
1$ (for some $n\in\NN\cup\{0\}$) appears in $\tau^{n+3}(a)$.
\end{Proposition}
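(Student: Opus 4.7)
My plan is to locate $w$ inside a short window of the fixed point $\eta$ and then exhibit every such window as a factor of $\tau^{n+3}(a)$.

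Since $\mbox{Sub}_\tau = \mbox{Sub}(\eta)$, the word $w$ appears somewhere in $\eta$. Applying $\tau^n$ to the decomposition $\eta = a\, r_1\, a\, r_2\, a\, \ldots$ and using that $\eta$ is a fixed point of $\tau$, one obtains
$$\eta = \tau^n(a)\, \tau^n(r_1)\, \tau^n(a)\, \tau^n(r_2)\, \tau^n(a)\, \ldots,$$
which presents $\eta$ as a concatenation of blocks $\tau^n(a)$ of length $2^{n+1}-1$ separated by single letters from $\{x,y,z\}$. Since $|w| \leq 2^{n+1} - 1$ is strictly less than the block-plus-separator period $2^{n+1}$, a brief positional case check (on whether the starting position of $w$ in $\eta$ lies inside a block or on a separator) shows that $w$ fits inside some factor $\tau^n(a)\, r\, \tau^n(a)$ of $\eta$ with $r \in \{x,y,z\}$.

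It then remains to prove that, for every letter $r \in \{x,y,z\}$, the word $\tau^n(a)\, r\, \tau^n(a)$ is a factor of $\tau^{n+3}(a)$. Iterating the recursion formula (RF) three times expresses $\tau^{n+3}(a)$ as an alternating sequence of eight copies of $\tau^n(a)$ separated by the seven single letters
$$\tau^n(x),\ \tau^{n+1}(x),\ \tau^n(x),\ \tau^{n+2}(x),\ \tau^n(x),\ \tau^{n+1}(x),\ \tau^n(x).$$
By (RF), the three values $\tau^n(x), \tau^{n+1}(x), \tau^{n+2}(x)$ run through the $3$-cycle $x \mapsto y \mapsto z \mapsto x$, so every element of $\{x,y,z\}$ occurs at least once among these seven separators. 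Each such occurrence is flanked by two copies of $\tau^n(a)$, so $\tau^n(a)\, r\, \tau^n(a)$ indeed appears in $\tau^{n+3}(a)$ for every $r\in\{x,y,z\}$, and combining with the first step gives the claim.

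The only delicate point is the positional argument in the second paragraph: one has to rule out the possibility that a window of length $2^{n+1}-1$ might straddle three different blocks $\tau^n(a)$. This is prevented by the strict inequality $|w| < 2^{n+1}$ (the period of the block decomposition), so I do not expect any substantial obstacle beyond a careful bookkeeping of positions.
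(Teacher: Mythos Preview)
Your proof is correct and follows essentially the same route as the paper's own argument: decompose $\eta$ into copies of $\tau^n(a)$ separated by single letters, observe that any factor of length at most $|\tau^n(a)|$ sits inside one window $\tau^n(a)\,r\,\tau^n(a)$, and then check that all three such windows occur in $\tau^{n+3}(a)$. The paper is more terse in both steps (it asserts the window containment directly and says the three words ``can easily be seen'' to appear in $\tau^{n+3}(a)$), whereas you spell out the positional case split and the threefold iteration of (RF) explicitly, but the underlying strategy is identical.
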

\begin{proof} Recall that $\mbox{Sub}_\tau = \mbox{Sub} (\eta)$.
As discussed above,  there exist $r_j^{(n)} := \tau^n (r_j) \in
\{x,y,z\}$, with
$$\eta = \tau^{n}(a) r_1^{(n)} \tau^{n}(a) r_2^{(n)} \tau^{n}(a)... $$ Thus,  any word
of length $L \leq |\tau^{n}(a)|$ is a subword of one of the three
words
$$\tau^{n}(a) x \tau^{n}(a), \tau^{n}(a) y \tau^{n}(a), \tau^{n}(a) z \tau^{n}(a).$$ These
three  words can easily be seen to appear in $\tau^{n+3}(a)$.
\end{proof}

We can use the previous proposition to obtain  the values of
$\complexity (L)$ for small values of $L$ by inspection of $\tau^{k}(a)$ for
suitable small $k$s. This gives
$$\complexity (1) = 4, \complexity (2) = 6, \complexity (3) = 8, \complexity (4) = 10.$$
From the previous proposition we also directly  obtain an upper
bound for the word  complexity.

\begin{Lemma}[Upper bound]\label{lem-upper} Let $L = |\tau^{n}(a)| = 2^{n+1} - 1$ for some $n\in
\NN$. Then,
$$\complexity (L) \leq 2 L + \frac{ L + 1}{2} = 2^{n+2} + 2^n  - 2.$$
\end{Lemma}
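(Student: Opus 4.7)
The plan is to count length-$L$ subwords of $\eta$ by combining the level-$n$ decomposition
$\eta = \tau^n(a)\, r_1^{(n)}\, \tau^n(a)\, r_2^{(n)} \cdots$
with the self-similarity $\tau^n(a) = \tau^{n-1}(a)\, c\, \tau^{n-1}(a)$, where $c := \tau^{n-1}(x)$. The first yields a naive count of the right order, and the second forces exactly $2^n$ coincidences that bring the bound down to the required value.

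For the naive count I use that consecutive separators in the level-$n$ decomposition are $L+1$ positions apart, so a length-$L$ window either lies inside a single block (giving the subword $\tau^n(a)$) or straddles exactly one separator $r \in \{x,y,z\}$ sitting at some position $p \in \{1,\dots,L\}$ of the window, giving
$$W_{p,r} := \bigl(\text{suffix of }\tau^n(a)\text{ of length }p-1\bigr)\, r\, \bigl(\text{prefix of }\tau^n(a)\text{ of length }L-p\bigr).$$
Since each of $x,y,z$ occurs as some $r_k^{(n)}$, this produces at most $1 + 3L = 6\cdot 2^n - 2$ candidate subwords.

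The gain of $2^n$ comes from coincidences forced by self-similarity. Using $\tau^n(a) = \tau^{n-1}(a)\, c\, \tau^{n-1}(a)$ one sees that $\tau^n(a)$ has $c$ at position $2^n$ and satisfies $\tau^n(a)[m] = \tau^n(a)[m+2^n]$ for $1 \le m \le 2^n - 1$ (both sides equal $\tau^{n-1}(a)[m]$). Substituting these identities into the definition of $W_{p,c}$ and $W_{p+2^n,c}$ and expanding position by position, both words equal
$$\bigl(\text{suffix of }\tau^{n-1}(a)\text{ of length }p-1\bigr)\, c\, \tau^{n-1}(a)\, c\, \bigl(\text{prefix of }\tau^{n-1}(a)\text{ of length }2^n-1-p\bigr)$$
for $p = 1,\dots,2^n-1$; similarly $W_{2^n,c} = \tau^{n-1}(a)\, c\, \tau^{n-1}(a) = \tau^n(a)$. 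These are $2^n$ identifications in total (the $2^n - 1$ pairs $W_{p,c} = W_{p+2^n,c}$ together with $W_{2^n,c} = \tau^n(a)$), each collapsing two candidates into one, so
$$\complexity(L) \le 1 + 3L - 2^n = 5\cdot 2^n - 2 = 2L + \tfrac{L+1}{2}.$$

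The main obstacle is recognising which coincidences to look for. The guiding intuition is that a length-$L$ subword containing the letter $c$ is ambiguously anchored in the level-$n$ decomposition: by the recursion $\tau^n(a) = \tau^{n-1}(a)\, c\, \tau^{n-1}(a)$, such a subword can equally be viewed as straddling a between-block separator $r_k^{(n)} = c$ or as sitting around the internal $c$ at the middle of some block $\tau^n(a)$, and these two points of view give the shift $p \leftrightarrow p + 2^n$ on the parameter. Once this ambiguity is spotted, the identity $W_{p,c} = W_{p+2^n,c}$ is a direct substitution of the block formula.
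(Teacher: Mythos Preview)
Your proof is correct. Both your argument and the paper's rest on the same two facts---the level-$n$ block decomposition $\eta = \tau^n(a)\,r_1^{(n)}\,\tau^n(a)\,r_2^{(n)}\cdots$ and the self-similarity $\tau^n(a) = \tau^{n-1}(a)\,c\,\tau^{n-1}(a)$---but they are organised differently. The paper works inside the finite word $\tau^{n+3}(a)$, counts the $4L+4$ starting positions in its first half, and then removes three separate families of repeated positions found by inspection. You instead parametrise subwords intrinsically by the pair $(p,r)$ recording where and which separator the window straddles; this gives the cleaner naive count $1+3L$ directly, and all the overcounting collapses into the single family of identifications $W_{p,c}=W_{p+2^n,c}$ (together with $W_{2^n,c}=\tau^n(a)$). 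Your parametrisation makes it transparent \emph{why} the saving is exactly $2^n$: the $L+1$ candidates $\{\tau^n(a)\}\cup\{W_{p,c}:1\le p\le L\}$ are matched in pairs by the shift $p\mapsto p+2^n$. The paper's version is more hands-on but amounts to the same bookkeeping once one tracks which positions in $\tau^{n+3}(a)$ correspond to which $(p,r)$.
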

\begin{proof}
By the previous proposition, it suffices to give an upper bound for
the number of subwords  of length $L$  in $\tau^{n+3}(a)$.  In order
to be specific, we assume that $n$ is divisible by three. (The other
cases can be treated analogously.)  Thus, we obtain
$$\tau^{n+3}(a) = \tau^{n}(a) x \tau^{n}(a) y \tau^{n}(a) x \tau^{n}(a) z \tau^{n}(a) x \tau^{n}(a) y \tau^{n}(a) x
\tau^{n}(a).$$ Here, the $z$ in the 'middle' is at position
$|\tau^{n+2}(a)| +1$.  We will count the words of length $L$
appearing in $\tau^{n+3}(a)$ starting from the left and dismissing
words we obviously have already encountered : this will give us the
desired upper bound. We note that all subwords  of length $L$ or
less starting after the $z$ at the position $|\tau^{n+2}(a)| +1$
must have already appeared to the left of this position. Thus, we
can focus on subwords appearing in the first
$$|\tau^{n+2}(a)| + 1 = 4 L + 4$$
positions. We then see  the following:
\begin{itemize}
\item The subword of length $L$ appearing at position $\tau^{n}(a)+2 $ is
$\tau^{n}(a)$ and has already appeared at the first position.
\item The subwords of length $L$ at the positions
$P\in \left[|\tau^{n+1}(a)| +2,|\tau^{n+1}(a)| + |\tau^{n}(a)| +
3\right]$ are subwords of $\tau^{n}(a) x \tau^{n}(a)$, which have
already occurred in the prefix $\tau^{n}(a) x \tau^{n}(a)$ of
$\tau^{n+3}(a)$.
\item  There is the word  $v=\tau^{n}(a) z \tau^{n}(a)$ appearing at position
$|\tau^{n+2}(a)| + 1$ of $\tau^{n+3}(a)$. We can decompose this as
$$v=\tau^{n-1}(a) z \tau^{n-1}(a) z \tau^{n-1}(a) z \tau^{n-1}(a).$$
Then, $v$ starts with three copies of $\tau^{n-1}(a) z$. Clearly,
the subwords of length $L$ starting in the second copy of
$\tau^{n-1}(a) z$ have already appeared starting in the first copy
of $\tau^{n-1}(a) z$.
\end{itemize}
Taking these double occurrences into account we  obtain
$$\complexity (L) \leq 4 L + 4 - \left( 1 +  (|\tau^{n}(a)| + 2) +
(|\tau^{n-1}(a)| + 1)\right) = 2 L + \frac{L+1}{2}.$$ This finishes the
proof.
\end{proof}

We now complement this by a lower bound on the complexity
difference.

\begin{Lemma}[Lower bound] \label{lem-lower-cd}
(a) The inequality $\complexity (L+1) - \complexity (L) \geq 2$
holds for all $L \in \NN$.

(b) For  $L\in\NN$ with  $2^n \leq L \leq 2^n + 2^{n-1} -1$ (for
some $n\geq 2$) $\complexity (L+1) - \complexity (L) \geq 3$ holds.
\end{Lemma}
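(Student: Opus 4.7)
The plan is to use the elementary identity
\[
\mathcal{C}(L+1)-\mathcal{C}(L)=\sum_{w\in\mbox{Sub}_\tau,\ |w|=L}\bigl(\#\{s\in\mathcal{A}:ws\in\mbox{Sub}_\tau\}-1\bigr),
\]
in which only right-special words contribute positively. I will accordingly exhibit enough right-special words of length $L$ with enough right-extensions.

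For (a), I would pick $n$ with $|\tau^n(a)|\geq L$ and let $u$ be the suffix of $\tau^n(a)$ of length $L$. The factorization $\eta=\tau^n(a)\,r_1^{(n)}\,\tau^n(a)\,r_2^{(n)}\,\tau^n(a)\cdots$ with $r_j^{(n)}=\tau^n(r_j)$, combined with the fact that $\{r_j:j\geq 1\}=\{x,y,z\}$ (visible already from $\eta=axayaxaz\cdots$) and that $\tau^n$ permutes $\{x,y,z\}$, shows that $u$ is immediately followed in $\eta$ by each of $x$, $y$, $z$. Hence $u$ alone contributes $3-1=2$ to the difference, proving (a).

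For (b), with $2^n\leq L\leq 2^n+2^{n-1}-1$ and $n\geq 2$, the plan is to exhibit a second right-special word $v\neq u$ contributing one further unit. Let $v$ be the suffix of length $L$ of $\tilde w:=\tau^{n-2}(a)\tau^{n-2}(x)\tau^{n-1}(a)$; this is well-defined since $|\tilde w|=3\cdot 2^{n-1}-1\geq L$. Using $\tau^{n-1}(a)=\tau^{n-2}(axa)$ I rewrite $\tilde w=\tau^{n-2}(axaxa)$, so any occurrence of the length-$5$ word $axaxa$ in $\eta$ with right-follower $s$ lifts under $\tau^{n-2}$ to an occurrence of $\tilde w$ with right-follower $\tau^{n-2}(s)$. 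A direct inspection of $\tau^4(a)=\tau^3(a)\cdot x\cdot\tau^3(a)$ (length $31$) reveals two occurrences of $axaxa$ straddling the central letter whose right-followers are the distinct letters $x$ and $y$; lifting by $\tau^{n-2}$ then produces extensions of $\tilde w$ by $\tau^{n-2}(x)$ and by $\tau^{n-2}(y)=\tau^{n-1}(x)$, so $\tilde w$, and hence its suffix $v$, is right-special with at least two extensions. It remains to check $v\neq u$: both words end with the same $\tau^{n-1}(a)$ of length $2^n-1$, but the letter immediately preceding this block (present in both because $L\geq 2^n$) is $\tau^{n-1}(x)$ in $u$ and $\tau^{n-2}(x)$ in $v$, which differ since $\tau$ acts as a $3$-cycle on $\{x,y,z\}$. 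Combining $u$ and $v$ then yields $\mathcal{C}(L+1)-\mathcal{C}(L)\geq(3-1)+(2-1)=3$.

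The only genuinely delicate step is the inspection of $\tau^4(a)$ in part (b); once the two occurrences of $axaxa$ with distinct right-followers have been verified at that fixed small level, applying $\tau^{n-2}$ performs the lifting uniformly in $n$. Everything else is routine bookkeeping with the recursion $(RF)$.
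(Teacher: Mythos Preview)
Your proof is correct and follows essentially the same approach as the paper: for (a) both take the length-$L$ suffix of $\tau^n(a)$ with its three extensions $x,y,z$, and for (b) both exhibit the same second right-special word, namely the length-$L$ suffix of $\tau^{n-2}(a)\,\tau^{n-2}(x)\,\tau^{n-1}(a)$, distinguished from the first by the letter preceding the final $\tau^{n-1}(a)$-block. The only cosmetic difference is that the paper fixes $n\bmod 3$ and reads off the two extensions from an explicit expansion of $\tau^n(a)\,z\,\tau^n(a)\,y$, whereas you find $axaxa$ followed by $x$ and by $y$ inside $\tau^4(a)$ and lift by $\tau^{n-2}$; your formulation has the minor advantage of treating all $n$ uniformly without a WLOG assumption.
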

\begin{proof} (a) It suffices to show that there exists a word $w$
of length $L$ such that $w x, wy, wz$ all appear in
$\mbox{Sub}_\tau$.  Consider an arbitrary  $k\in\NN$ with $L\leq
|\tau^{k}(a)|$. It is not hard to see that $\tau^{k}(a) x, \tau^{k}(a)y,
\tau^{k}(a)z$ all appear in $\mbox{Sub}_\tau$. Thus,   $w$ can be chosen
as the suffix of $\tau^{k}(a)$ of length $L$.

(b) By assumption, we have
$$ 2^n = |\tau^{n-1}(a)| + 1 \leq L \leq |\tau^{n-1}(a)| + 1 + |\tau^{n-2}(a)| =
2^n + 2^{n-1} -1.$$ In order to be specific, let us assume that
 $n$ is
such that $\tau^{n}(a) = \tau^{n-1}(a) x \tau^{n-1}(a)$. As already noted
in the proof of (a), any  suffix of $\tau^{n}(a)$ has three different
extensions. This is in particular true for the suffix of $\tau^{n}(a)$
of length $L$. Note that this suffix  has $x \tau^{n-1}(a)$ as a suffix
(due to $L\geq |\tau^{n-1}(a)| + 1$).

We are going to find another word with two different extensions.
Considering $\tau^{n+3}(a)$, we find that $w = \tau^{n}(a) z \tau^{n}(a) y$
belongs to $\mbox{Sub}_\tau$. Now, a short calculation gives
$$w = \tau^{n-1}(a) x \tau^{n-2}(a) z \tau^{n-2}(a) z \tau^{n-2}(a) z \tau^{n-2}(a) x
\tau^{n-1}(a) y.$$ Thus,
$$ \tau^{n-2}(a) z \tau^{n-2}(a) z \tau^{n-2}(a) z  = \tau^{n-2}(a) z
\tau^{n-1}(a) z $$ belongs to $\mbox{Sub}_\tau$. This shows that every
suffix of $u$ of $\tau^{n-2}(a) z \tau^{n-1}(a)$ can be extended by $z$. On
the other hand, the above formula also gives that $\tau^{n-1}(a) z
\tau^{n-1}(a) x$ occurs in $w$. Thus, $u$  can also be extended by $x$
and, hence, has two  extensions. Taking into account that the
suffices of length $L$ of $\tau^{n-2}(a) z \tau^{n-1}(a)$ have  $z
\tau^{n-1}(a)$ as a suffix, (which is different from $x \tau^{n-1}(a)$,) we
see that these extensions are different from the previously
encountered extensions.
\end{proof}
Clearly, the previous lemma  implies a  lower bound on the
complexity function via
$$\complexity (L) \geq \complexity (L_0) + \sum_{k=L_0}^{L-1} (\complexity (k+1) -
\complexity (k))$$ for arbitrary $L_0\leq L$ in $\NN$. We are going
to use this for a particular $L$ and then for $L_0$.

\begin{Proposition}\label{lem-sometimes-sharp} For $L$ with $L = |\tau^{n}(a)| = 2^{n+1} - 1$ with $n\geq 2$,
we have $\complexity (L) = 2^{n+2} + 2^n - 2.$
\end{Proposition}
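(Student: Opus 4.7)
The plan is to sandwich $\mathcal{C}(L)$ between the upper bound supplied by Lemma \ref{lem-upper} and a matching lower bound obtained by telescoping the complexity differences controlled by Lemma \ref{lem-lower-cd}. Since Lemma \ref{lem-upper} already gives $\mathcal{C}(L) \leq 2^{n+2} + 2^n - 2$ for $L = 2^{n+1}-1$, all the work is to produce the reverse inequality.

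For the lower bound, I will start from the known value $\mathcal{C}(4) = 10$ (obtained by direct inspection via Proposition \ref{prop-appearance}) and write
\[
\mathcal{C}(L) \;=\; \mathcal{C}(4) \;+\; \sum_{k=4}^{2^{n+1}-2} \bigl(\mathcal{C}(k+1) - \mathcal{C}(k)\bigr).
\]
I then partition the summation range according to which clause of Lemma \ref{lem-lower-cd} applies. For each $m$ with $2 \leq m \leq n$, the indices $k \in [2^m,\, 2^m + 2^{m-1} - 1]$ — a block of length $2^{m-1}$ — contribute at least $3$ to the sum by part (b), while every other index contributes at least $2$ by part (a).

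The counting is then routine: the total number of summands is $2^{n+1} - 5$; the number of indices lying in some "high" block is
\[
\sum_{m=2}^{n} 2^{m-1} \;=\; 2^n - 2,
\]
leaving $2^n - 3$ indices of "low" type. Hence
\[
\mathcal{C}(L) \;\geq\; 10 + 3(2^n - 2) + 2(2^n - 3) \;=\; 5\cdot 2^n - 2 \;=\; 2^{n+2} + 2^n - 2,
\]
which matches the upper bound from Lemma \ref{lem-upper}, forcing equality.

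There is no real obstacle here beyond careful bookkeeping. The only subtle point is verifying that the blocks $[2^m, 2^m + 2^{m-1}-1]$ for $m = 2,\ldots,n$ are pairwise disjoint and all contained in the range $[4, 2^{n+1}-2]$; this is immediate since the block for index $m$ sits inside $[2^m, 2^{m+1}-1]$ and the last block ends at $2^n + 2^{n-1} - 1 \leq 2^{n+1} - 2$. Everything else is a short arithmetic check.
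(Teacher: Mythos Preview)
Your proof is correct and follows essentially the same route as the paper's: both sandwich $\mathcal{C}(L)$ between the upper bound of Lemma~\ref{lem-upper} and a matching lower bound obtained by telescoping from $\mathcal{C}(4)=10$ and applying the two clauses of Lemma~\ref{lem-lower-cd} on the appropriate blocks of the interval $[4,2^{n+1}-1]$. Your bookkeeping is in fact a bit cleaner than the paper's, which organizes the same count with a small correction term rather than counting high and low indices directly.
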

\begin{proof}
We decompose the interval $[4, 2^{n+1}-1] $  according to powers of
$2$ as
$$\bigcup_{k=1}^{n-2} \left( [2^{k+1}, 2^{k+1} + 2^k-1]
\cup [2^{k+1} + 2^k-1, 2^{k+2}]\right) \cup ( [2^{n}, 2^{n} +
2^{n-1}-1] \cup [2^{n} + 2^{n-1}, 2^{n+1}-1]  ).$$ We then apply (a)
and (b) of the previous lemma to obtain
$$ \complexity (L) \geq   \complexity (4) -2 + \sum_{k=1}^{n-1}  (3  \cdot 2^k + 2 \cdot 2^k)  = 2^{n+2} + 2^{n}
-2.$$
 Here,  $-2$ arises in the first step, as our  sum
deals with  the full interval $[4, 2^{n+1}]$, whereas we actually
only need the interval $[4, 2^{n+1}-1]$. In the last  step, we use
the already checked $\complexity (4) = 10$. Comparing this
lower bound with the upper bound of Lemma \ref{lem-upper}, we see
that both bounds agree, and this gives the desired statement.
\end{proof}

After these preparations we can now state the main technical
ingredient of our investigation.

\begin{Lemma}[Growth of complexity] \label{growth-lemma}
For any $n\geq 2$,  and for $L = 2^n + k$ with $0\leq k < 2^{n}$, we have
$$
\complexity (L+1) - \complexity (L)=\left\{ \begin{matrix}  3 :
0\leq k < 2^{n-1} \\ 2 :  2^{n-1} \leq k < 2^n \end{matrix}\right.$$
\end{Lemma}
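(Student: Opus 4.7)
The plan is to pin down the growth of $\complexity$ on each dyadic block $[2^n, 2^{n+1}-1]$ by combining the sharp value at $L = 2^{n+1}-1$ from Proposition \ref{lem-sometimes-sharp} with the pointwise lower bounds from Lemma \ref{lem-lower-cd}, and to observe that these two inputs already match exactly. Concretely, I would work with the telescoping sum
\[
S_n := \sum_{L = 2^n - 1}^{2^{n+1}-2} \bigl( \complexity(L+1) - \complexity(L) \bigr) = \complexity(2^{n+1}-1) - \complexity(2^n - 1).
\]
For $n \geq 3$, both endpoints are given by Proposition \ref{lem-sometimes-sharp} and a subtraction yields $S_n = 2^{n+1} + 2^{n-1} = 5\cdot 2^{n-1}$; for $n=2$, the same identity holds since $\complexity(7) = 18$ by the proposition and $\complexity(3) = 8$ by the earlier inspection.

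Next I would bound $S_n$ from below using Lemma \ref{lem-lower-cd}. Part (a) contributes at least $2$ to each of the $2^n$ summands, while part (b) contributes an additional $+1$ to each of the $2^{n-1}$ summands with $L \in [2^n, 2^n + 2^{n-1} - 1]$. Adding these gives
\[
S_n \geq 2\cdot 2^n + 2^{n-1} = 5\cdot 2^{n-1},
\]
which matches the exact value of $S_n$ found above. Therefore every one of the lower bounds must be achieved with equality, so
\[
\complexity(L+1) - \complexity(L) = 3 \quad \text{for } L \in [2^n,\, 2^n + 2^{n-1}-1],
\]
and
\[
\complexity(L+1) - \complexity(L) = 2 \quad \text{for } L \in \{2^n - 1\} \cup [2^n + 2^{n-1},\, 2^{n+1}-2].
\]
This gives the desired value for every $L = 2^n + k$ with $0 \leq k \leq 2^n - 2$.

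The only case not covered is $k = 2^n - 1$, i.e.\ $L = 2^{n+1} - 1$, which should give $\complexity(L+1) - \complexity(L) = 2$. I would handle it by simply applying the same argument with $n$ replaced by $n+1 \geq 3$: in that iteration, $L = 2^{n+1}-1$ plays the role of $2^{n'}-1$ with $n' = n+1$, and the analysis above shows that this term equals exactly $2$. All cases of the lemma are then covered, provided $n \geq 2$ so that Proposition \ref{lem-sometimes-sharp} is applicable at both endpoints (including in the shifted argument).

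There is no genuine obstacle: the proof is essentially a rigidity argument, where the sum of the available lower bounds miraculously saturates the known exact total. The only thing to double-check is the boundary case $n=2$, where the closed-form $\complexity(2^n - 1) = 2^{n+1} + 2^{n-1} - 2$ needs to be verified directly from $\complexity(3) = 8$ rather than via Proposition \ref{lem-sometimes-sharp}; this is immediate.
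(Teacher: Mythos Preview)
Your proof is correct and follows essentially the same rigidity argument as the paper: the pointwise lower bounds from Lemma~\ref{lem-lower-cd} telescope to exactly the value of $\complexity$ at $L=2^{n+1}-1$ given by Proposition~\ref{lem-sometimes-sharp}, so each individual bound must be sharp. The paper phrases this in one sentence (referring back to how Proposition~\ref{lem-sometimes-sharp} itself was proved by matching the summed lower bound against the upper bound of Lemma~\ref{lem-upper}), whereas you carry out the telescoping block by block on $[2^n-1,2^{n+1}-1]$ and explicitly treat the endpoint $k=2^n-1$ via the next block; this extra care is fine but not a different idea.
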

\begin{proof} It suffices to show that for any $n\in \NN$, and for $L =
2^{n+1} - 1$,  the  lower bound on the complexity function  coming
from Lemma \ref{lem-lower-cd} gives the correct value of the
complexity function. This is, however, just the content of the
previous proposition. This finishes the proof.
\end{proof}

Based on this lemma we can now determine, in Theorem \ref{thm-complexity} and in Theorem \ref{thm-right}, the complexity and
the set of right-special words.

\begin{proof}[Proof of Theorem \ref{thm-complexity}]  This  follows easily from Lemma \ref{growth-lemma} and
Proposition \ref{lem-sometimes-sharp}.
\end{proof}

\begin{proof}[Proof of Theorem \ref{thm-right}] From Lemma \ref{growth-lemma} we   know that the lower bounds in Lemma \ref{lem-lower-cd}
are sharp. Inspecting the proof of that lemma, we find exactly
the  words which   have more than one extension.
\end{proof}

\bigskip

\noindent \textbf{Remark.}

\medskip

 (a) We note that $\mbox{Sub}_\tau$ is closed under
reflections, hence, the statements in Theorem \ref{thm-right} about
right-special words easily translate on corresponding statements
about left-special words. As a consequence one obtains  that the
words $\tau^{n}(a)$, $n\in \NN\cup \{0\}$, are both right-special
and left-special (and are the only words with this property).

\medskip

(b)  Similar subshifts of low complexity can be associated to other
groups of intermediate growth. This more general framework and other
related topics  will be discussed in a forthcoming paper.

\medskip

(c) To any subshift one can associate a family of graphs, indexed by
the natural numbers,  known as Rauzy graphs (or de Bruijn graphs in
the case of the full shift) in the following way:  For each natural
number $n$ the vertices are the words of length $n$ and the edges
are the words of length $n+1$. Specifically, there is an edge from
the  $u$ to $v$ if there exists a word $w$ of length $n+1$ such that
$u$ is a prefix of of $w$ and $v$ is a suffix of $w$.

Then, these graphs basically store the information on the
right-special words. Thus, given that we have determined these
words, it is possible to determine the Rauzy graphs based on the
material presented in this section, compare Figure \ref{Rauzy}.
Further details will be provided elsewhere.

\begin{figure}[h!]
\begin{center}
\hspace*{-1cm}
\includegraphics[scale=0.3]{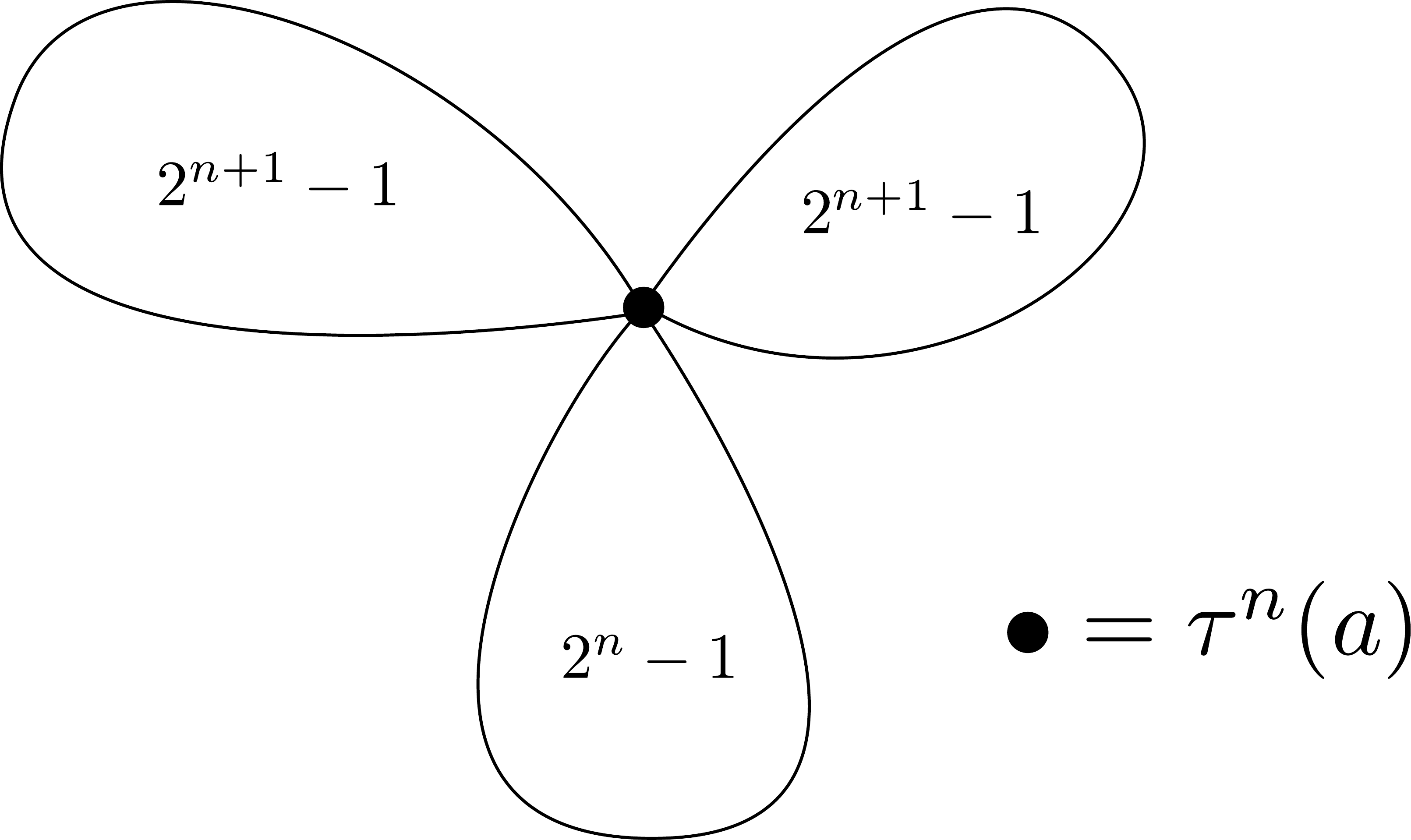}
\hspace*{-1cm}
 \caption{The Rauzy graph containing $\tau^n (a)$. The numbers give the number of vertices for each loop (without counting the middle vertex).}
\label{Rauzy}
\end{center}
\end{figure}

\bigskip

\medskip

\textbf{Acknowledgments.} R. G. was partially supported by the NSF
grant DMS-1207669 and by ERC AG COMPASP. The authors acknowledge
support of the Swiss National Science Foundation. Part of this
research was carried out while D. L. and R. G. were visiting the
Department of mathematics of the University of Geneva. The
hospitality of the department is gratefully acknowledged. The
authors thank Roman Kogan for providing Figure 2.

\end{document}